\documentclass[11pt,reqno]{amsart}
\usepackage[T1]{fontenc}
\usepackage[utf8]{inputenc}
\usepackage{lmodern}
\usepackage{amsmath,amssymb,amsthm,mathtools}
\usepackage[margin=1.08in]{geometry}
\usepackage{microtype}
\usepackage{enumitem}
\usepackage{xcolor}

\usepackage[colorlinks=true,linkcolor=blue,citecolor=blue,urlcolor=blue]{hyperref}
\usepackage[capitalise,noabbrev]{cleveref}

\setlist[enumerate]{label=\textup{(\roman*)},leftmargin=*,itemsep=3pt}
\numberwithin{equation}{section}
\newtheorem{theorem}{Theorem}[section]
\newtheorem{proposition}[theorem]{Proposition}
\newtheorem{lemma}[theorem]{Lemma}
\newtheorem{corollary}[theorem]{Corollary}
\theoremstyle{definition}
\newtheorem{definition}[theorem]{Definition}
\theoremstyle{remark}
\newtheorem{remark}[theorem]{Remark}

\newcommand{\kk}{k}
\newcommand{\eps}{\varepsilon}
\newcommand{\id}{\operatorname{id}}
\newcommand{\udim}{\operatorname{udim}}
\newcommand{\rann}{\operatorname{rann}}
\newcommand{\lann}{\operatorname{lann}}
\newcommand{\Mat}{\mathrm{M}}
\newcommand{\Span}{\operatorname{span}}
\newcommand{\op}{\mathrm{op}}
\newcommand{\cop}{\mathrm{cop}}
\allowdisplaybreaks[2]

\title[A noetherian Hopf algebra has bijective antipode]{A noetherian Hopf algebra has bijective antipode}
\author[A.~Sciandra]{Andrea Sciandra}
\address{%
\parbox[b]{\linewidth}{Département de Mathématiques, Université Libre de Bruxelles, Boulevard du Triomphe, B-1050 Bruxelles, Belgium}}
\email{andrea.sciandra@ulb.be}
\urladdr{\url{www.andreasciandra.com}}
\date{}
\subjclass[2020]{Primary 16T05, 16P40; Secondary 16D70, 16P60}
\keywords{Hopf algebra, antipode, noetherian ring, uniform dimension, essential submodule, coideal}
\hypersetup{
  pdftitle={One-sided uniform dimension and bijectivity of the antipode},
  pdfsubject={Complete candidate argument for the one-sided noetherian antipode conjecture}
}

\begin{document}

\begin{abstract}
We prove that a Hopf algebra over an arbitrary field has a bijective antipode if either its left or right regular module has finite uniform dimension. In particular, this solves Skryabin’s conjecture, posed in 2006, that every noetherian Hopf algebra has a bijective antipode. Starting from Skryabin’s injectivity theorem for weakly finite Hopf algebras, we associate to each finite-dimensional subcoalgebra an essential left ideal of common denominators that is also a right coideal in the antipode image. An annihilator identity makes this image essential in the original algebra as a left module over the image, yielding finite left uniform dimension from finite right uniform dimension. A second denominator construction inside the antipode image, together with a restricted inverse-antipode identity, proves surjectivity.
\end{abstract}
\maketitle

\tableofcontents

\section{Introduction}

The bijectivity of the antipode is a basic structural question for
infinite-dimensional Hopf algebras. In 2006, Skryabin conjectured that
every noetherian Hopf algebra has a bijective antipode
\cite[p.~623]{Sk06}. The conjecture is still listed as an open problem
in Brown's recent survey \cite[Question 2.8 and \S4.1]{Br26}.
Skryabin's injectivity theorem for weakly finite Hopf algebras
\cite[Theorem A (i)]{Sk06} applies, in particular, under noetherianity
on either side. Thus, in the noetherian case, the remaining issue
is surjectivity.

Several special cases of the conjecture were previously established.
Skryabin proved bijectivity for semiprime right noetherian Hopf
algebras and for affine noetherian PI Hopf algebras
\cite[Corollaries 1 and 2]{Sk06}.
Bijectivity is also known for noetherian AS-Gorenstein Hopf algebras
\cite[Corollary~0.3]{LOWY18}, and for residually finite-dimensional
Hopf algebras that are right or left noetherian
\cite[Theorem 5.3]{Sk21}.

In this paper, we prove the stronger criterion in Theorem \ref{thm:main}:
a Hopf algebra over an arbitrary field has a bijective antipode
whenever its left or right regular module has finite uniform
dimension. In particular, every Hopf algebra that is noetherian on either
side has a bijective antipode, resolving Skryabin's conjecture.

The main construction assigns to each finite-dimensional subcoalgebra
an essential left ideal of common denominators in the antipode image,
which is also a right coideal. A coideal annihilator identity then
makes this image essential in the original algebra as a module over
the image, yielding the uniform-dimension bound in Proposition \ref{prop:uniform-bound}. A second denominator construction, now
inside the antipode image, produces a nonzero intersection with the
next image. The inverse-antipode identity on its natural domain forces
the first denominator ideal to contain the unit. Surjectivity follows
by the fundamental theorem of coalgebras.

\section{Preliminaries}

We recall some notions and results that will be useful in the following.
All algebras, coalgebras, vector spaces, and tensor products are over a
fixed arbitrary field \(\kk\). Algebras and algebra morphisms are
unital, and modules are unital. Coalgebras and coalgebra morphisms are counital, and comodules are counital. We use Sweedler notation for calculations involving the coproduct. The antipode is always denoted by \(S\), referring to the
Hopf algebra under consideration. For a Hopf algebra \(H\), a \emph{right coideal} is a
subspace \(L\subseteq H\) satisfying $\Delta(L)\subseteq L\otimes H$. A left coideal satisfies \(\Delta(L)\subseteq H\otimes L\). The counit cannot vanish identically on a nonzero right
or left coideal.

We recall the standard notions of essential submodules and uniform
(or Goldie) dimension; see e.g.\ \cite[\S3D and \S6A]{Lam99}.

\begin{definition}
Let \(M\) be a left \(R\)-module. A submodule \(E\subseteq M\) is \emph{essential} if every nonzero
submodule of \(M\) has nonzero intersection with \(E\).
The \emph{uniform dimension} \(\udim(M)\) is the supremum of the
cardinalities of independent families of nonzero submodules, where
independent means that their sum is direct.
A nonzero module is \emph{uniform} if any two of its nonzero
submodules have nonzero intersection.
\end{definition}

For a left \(R\)-module \(M\), essentiality of \(E\subseteq M\)
is equivalently the following elementwise condition: for every
\(0\neq m\in M\), there is \(r\in R\) such that
\(0\neq rm\in E\). Indeed, apply the definition to the cyclic
submodule \(Rm\); conversely, every nonzero submodule contains such
a cyclic submodule.

We use the elementary module-theoretic properties that uniform dimension
is monotone on submodules and additive on finite direct sums. In
particular, if \(\udim(M)=d<\infty\), then
\(\udim(M^m)=md\). Every noetherian module has finite uniform dimension \cite[Corollary 6.7]{Lam99}.
For completeness, the latter assertion follows by successively choosing
uniform submodules: a nonzero noetherian module has a uniform submodule,
since otherwise repeated splittings into two disjoint nonzero submodules
would produce an infinite direct sum. If a finite sum of the chosen
uniform submodules is not essential, choose another one disjoint from
that sum. Noetherianity forces this process to stop. Otherwise the successive partial direct sums would form a
strictly increasing chain of submodules. The resulting finite
direct sum is essential and has finite uniform dimension.

We sum up some properties that will be needed throughout the paper.

\begin{lemma}[{\cite[Exercise 6 (A) in \S3 and Corollary 6.10 (2)]{Lam99}}]\label{lem:essential-basics}
The following properties hold for modules over any unital ring.
\begin{enumerate}
\item A finite intersection of essential submodules is essential.
\item If \(E\subseteq M\) is essential and \(\udim(E)=d<\infty\),
then \(\udim(M)=d\).
\item If \(\udim(M)<\infty\) and \(f:M\to M\) is an injective
module endomorphism, then \(f(M)\) is essential in \(M\).
\end{enumerate}
\end{lemma}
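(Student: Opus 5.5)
We prove the three items separately, working throughout with left modules over a unital ring \(R\). We use the elementwise criterion for essentiality recalled above: \(E\subseteq M\) is essential iff for every \(0\neq m\in M\) there is \(r\in R\) with \(0\neq rm\in E\).

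For (i), by induction it suffices to treat two essential submodules \(E_1,E_2\subseteq M\). Let \(N\neq 0\) be a submodule of \(M\). Since \(E_1\) is essential, \(N\cap E_1\neq 0\). This is a nonzero submodule of \(M\), so essentiality of \(E_2\) gives \((N\cap E_1)\cap E_2\neq 0\). Hence \(E_1\cap E_2\) meets every nonzero submodule nontrivially, so it is essential.

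For (ii), monotonicity gives \(\udim(M)\ge d\). For the reverse inequality, suppose \(N_1,\dots,N_n\) is an independent family of nonzero submodules of \(M\). Put \(N_i'=N_i\cap E\); each \(N_i'\) is nonzero by essentiality. The family \(N_1',\dots,N_n'\) is still independent, since it consists of submodules of the \(N_i\), and it lies in \(E\). Therefore \(n\le d\). Taking the supremum over all finite independent families gives \(\udim(M)\le d\); infinite independent families contain arbitrarily large finite subfamilies, so they are covered as well.

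Item (iii) is the only step with real content. Suppose \(f(M)\) is not essential, and choose a nonzero submodule \(N\) with \(N\cap f(M)=0\). We claim that
\[
N,\ f(N),\ f^2(N),\ \dots
\]
is an independent family. Each \(f^i(N)\) is nonzero because \(f\) is injective. To check independence, suppose \(\sum_{i=0}^{n} f^i(x_i)=0\) with \(x_i\in N\). Then
\[
x_0=-f\Big(\sum_{i\ge1}f^{i-1}(x_i)\Big)\in N\cap f(M)=0,
\]
so \(x_0=0\). Now \(f\big(\sum_{i\ge1}f^{i-1}(x_i)\big)=0\), and injectivity gives \(\sum_{i\ge1}f^{i-1}(x_i)=0\). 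Inducting on the length of the sum shows that all \(f^i(x_i)=0\). We thus obtain an infinite independent family of nonzero submodules, contradicting \(\udim(M)<\infty\). Hence \(f(M)\) is essential.

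The main obstacle is this independence bookkeeping in (iii); once it is set up, the rest is routine.
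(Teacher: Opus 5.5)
Your proof is correct. The paper gives no argument of its own and only cites Lam; your three arguments are the standard ones found there, including the infinite independent family $N, f(N), f^2(N),\dots$ that contradicts $\udim(M)<\infty$ in (iii).
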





\begin{definition}
A ring \(R\) is \emph{weakly finite} (also called \emph{stably finite}; see \cite[\S1B]{Lam99}) if, for every \(m\geq1\),
\[
  A,B\in\Mat_m(R),\quad AB=I_m\quad\Longrightarrow\quad BA=I_m.
\]
\end{definition}

\begin{lemma}[{\cite[\S1E, p.~17, and Exercise~1 in \S6]{Lam99}\label{lem:weakly-finite}}]
If \(\udim(R_R)<\infty\) or \(\udim({}_R R)<\infty\), then
\(R\) is weakly finite. Consequently every ring that is noetherian
on either side is weakly finite.
\end{lemma}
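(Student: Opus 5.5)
To prove Lemma~\ref{lem:weakly-finite}, I would first reduce to the case $m=1$ for the ring $\Mat_m(R)$, and then show that a ring with finite one-sided uniform dimension is Dedekind finite.

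\emph{Reduction.} The regular module of $\Mat_m(R)$ decomposes as
\[
{}_{\Mat_m(R)}\Mat_m(R)\cong (R^m)^{m},
\]
and the lattice of left ideals of $\Mat_m(R)$ corresponds to the lattice of submodules of ${}_R(R^m)$ via Morita equivalence. Independence is preserved under this correspondence, so
\[
\udim\bigl({}_{\Mat_m(R)}\Mat_m(R)\bigr)=m\cdot\udim({}_R R^m)=m^2\,\udim({}_R R),
\]
which is finite by the additivity noted above. The right-sided case is symmetric. It therefore suffices to prove: if $\udim({}_R R)<\infty$ and $ab=1$, then $ba=1$.

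\emph{The case $m=1$.} Suppose $ab=1$ and $ba\neq1$. Put $e=ba$; this is an idempotent with $e\neq1$. Right multiplication $\rho_b\colon x\mapsto xb$ is a left module endomorphism of ${}_R R$. It is injective, since $xb=0$ implies $x=xba\cdot$\dots; more directly, $x=x(ab)$ would require right cancellation of $b$. Since the right factor is $b$, I would instead use $\rho_a\colon x\mapsto xa$, which is injective: $xa=0$ implies $x=x\cdot1=xab=0$.

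By Lemma~\ref{lem:essential-basics}(iii), the image $Ra$ is essential in $R$. On the other hand, $Ra\cap R(1-e)=0$. Indeed, if $y=ra=s(1-ba)$, then
\[
y\,ba=ra\,ba=ra\,(ba)\quad\text{and}\quad y\,ba=s(1-ba)ba=0,
\]
since $(1-ba)ba=ba-baba=0$. Hence $raba=0$, so $ra=raba\cdot$\dots. Simplifying, $ra\cdot ba=r(ab)a=ra$, so $y=y\,ba=0$.

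Because $e\neq1$, the left ideal $R(1-e)$ is nonzero, and we have just shown it meets the essential left ideal $Ra$ trivially. This is a contradiction, so $ba=1$. The right-sided version uses left multiplication by $b$ together with $bR\cap(1-e)R=0$.

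\emph{The noetherian consequence.} This follows immediately, since noetherian modules have finite uniform dimension, as recalled above. The only delicate point is the Morita/additivity bookkeeping, which is routine.
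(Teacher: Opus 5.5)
The paper gives no argument for this lemma; it only cites Lam. Your proof is a correct, self-contained version of the standard argument. You reduce weak finiteness to Dedekind-finiteness of each $\Mat_m(R)$ and note that $\Mat_m(R)$ again has finite uniform dimension. Then, when $ab=1$, the map $x\mapsto xa$ is an injective endomorphism of ${}_RR$, so $Ra$ is essential by Lemma~\ref{lem:essential-basics}(iii). But $Ra=Rba$ has the complementary summand $R(1-ba)$, which must therefore vanish.

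Two blemishes should be cleaned up.
\begin{enumerate}
\item Your count $\udim({}_{\Mat_m(R)}\Mat_m(R))=m^2\,\udim({}_RR)$ is off by a factor of $m$. The lattice isomorphism between left ideals of $\Mat_m(R)$ and submodules of ${}_RR^m$ already gives the value $\udim({}_RR^m)=m\,\udim({}_RR)$; multiplying by $m$ again double counts (for $R=k$ a field you would get $m^2$ instead of $m$). Only finiteness is used, so the conclusion is unaffected.
\item Delete the false start about $\rho_b$. When $ba\neq1$ it is genuinely not injective, because $(1-ba)b=b-b(ab)=0$.
\end{enumerate}

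You can also bypass the Morita step, exactly as the paper handles matrices in Proposition~\ref{prop:denominators}. If $AB=I_m$, then $r\mapsto rA$ is an injective endomorphism of ${}_RR^{1\times m}$, which has uniform dimension $m\,\udim({}_RR)$. Its image $R^{1\times m}A=R^{1\times m}BA$ is therefore essential. Since $BA$ is idempotent, the complementary summand $R^{1\times m}(I_m-BA)$ is zero, which forces $BA=I_m$. This gives weak finiteness directly, without passing through Dedekind-finiteness of $\Mat_m(R)$ or the ideal-lattice correspondence.
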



\begin{theorem}[Skryabin {\cite[Theorem A (i)]{Sk06}}]
\label{thm:skryabin}
The antipode of a weakly finite Hopf algebra over a field is injective.
\end{theorem}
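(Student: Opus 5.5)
The plan is to work one finite-dimensional subcoalgebra at a time. Weak finiteness will turn the antipode axiom into a ``twisted'' counit identity on that subcoalgebra. That identity forces \(\ker S\) to be a coideal killed by the counit, and a coradical argument then gives \(\ker S=0\).

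\emph{Step 1: the twisted identity.} Let \(C\subseteq H\) be a finite-dimensional subcoalgebra, which exists around any element by the fundamental theorem of coalgebras. View \(C\) as a right \(C\)-comodule via \(\Delta\), choose a basis \(v_1,\dots,v_n\), and write \(\Delta(v_j)=\sum_i v_i\otimes x_{ij}\). Then \(X=(x_{ij})\in\Mat_n(H)\) satisfies
\[
\Delta(x_{ij})=\sum_k x_{ik}\otimes x_{kj},\qquad \varepsilon(x_{ij})=\delta_{ij},
\]
and the \(x_{ij}\) span \(C\), since \(v_j=\sum_i\varepsilon(v_i)x_{ij}\). The antipode axiom gives \(X\,S(X)=I_n=S(X)\,X\). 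Apply the anti-multiplicative map \(S\) to \(\sum_k x_{ik}S(x_{kj})=\delta_{ij}\). This gives \(BA=I_n\), where
\[
A=(a_{lk}),\quad a_{lk}=S(x_{kl}),\qquad B=(b_{kj}),\quad b_{kj}=S^2(x_{jk}).
\]
Since \(H\) is weakly finite, also \(AB=I_n\), that is, \(\sum_k S(x_{kl})S^2(x_{jk})=\delta_{lj}\). By linearity and the comultiplication formula for the \(x_{ij}\), this says
\[
S(h_{(2)})\,S^2(h_{(1)})=\varepsilon(h)\qquad\text{for all } h\in C.
\]
Since every element lies in some such \(C\), the identity holds for all \(h\in H\).

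\emph{Step 2: \(\ker S\) is a coideal annihilated by \(\varepsilon\).} Put \(I=\ker S\). Because \(S\) is an anti-coalgebra map, \(\Delta(S(h))=S(h_{(2)})\otimes S(h_{(1)})\), so \(I\) is a coideal. Concretely, if \(S(h)=0\) then \((S\otimes S)\Delta(h)=0\) after the flip, which gives \(\Delta(I)\subseteq I\otimes H+H\otimes I\). Now take \(h\in I\) and write \(\Delta(h)\) as a sum of tensors with the first factor in \(I\) or the second factor in \(I\). In the twisted identity, each tensor with second factor in \(I\) is killed by \(S(h_{(2)})\), and each tensor with first factor in \(I\) is killed by \(S^2(h_{(1)})\). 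Hence \(\varepsilon(I)=0\).

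\emph{Step 3: the coradical argument.} Let \(D\) be a simple subcoalgebra. Then \(I\cap D\) is a coideal of \(D\), because
\[
(I\otimes H+H\otimes I)\cap(D\otimes D)=(I\cap D)\otimes D+D\otimes(I\cap D).
\]
Coideals of \(D\) correspond to ideals of the simple finite-dimensional algebra \(D^*\), so \(I\cap D\) is \(0\) or \(D\). It cannot be \(D\), since \(\varepsilon(I)=0\) while \(\varepsilon\) does not vanish on a nonzero subcoalgebra. Thus \(I\cap H_0=0\), where \(H_0\) is the coradical. The standard Heyneman--Radford fact then applies: a coalgebra map injective on the coradical is injective, equivalently a coideal meeting the coradical trivially is zero. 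Applied to the projection \(H\to H/I\), this gives \(I=0\).

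\emph{Main obstacle.} The key step is Step 1, specifically extracting the twisted identity \(S(h_{(2)})S^2(h_{(1)})=\varepsilon(h)\) with the correct index bookkeeping; this is exactly where weak finiteness is used. Once that identity is in hand, Steps 2 and 3 are formal coalgebra arguments.
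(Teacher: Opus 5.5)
Your argument never actually uses weak finiteness, and that alone shows it cannot be correct. The identity you extract in Step~1 holds in every Hopf algebra. Applying the anti-multiplicative map $S$ to the axiom $\sum S(h_{(1)})h_{(2)}=\varepsilon(h)1$ gives $\sum S(h_{(2)})S^2(h_{(1)})=\varepsilon(h)1$ directly. In your matrix notation, $AB=I_n$ is simply the image under $S$ of $S(X)X=I_n$, just as $BA=I_n$ is the image of $XS(X)=I_n$. So $A=S(X)^{T}$ is two-sided invertible with inverse $B=S^2(X)^{T}$, with no hypothesis on $H$. Similarly, $\varepsilon(\ker S)=0$ in Step~2 is immediate from $\varepsilon\circ S=\varepsilon$. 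If Steps 1--3 were valid, they would therefore show that every Hopf algebra has injective antipode. That is false in general, which is why Skryabin's Theorem~A(i) needs a hypothesis; note that the paper does not reprove this theorem, it only quotes it.

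The actual errors are in Step~3. First, a two-sided coideal $J$ of a coalgebra $D$ corresponds via $J\mapsto J^{\perp}$ to a unital subalgebra of $D^*$, not to an ideal; ideals of $D^*$ correspond to subcoalgebras. Simple coalgebras do have nonzero proper coideals. Take the matrix coalgebra with basis $e_{ij}$, $n\geq 2$, and $\Delta(e_{ij})=\sum_k e_{ik}\otimes e_{kj}$. The span of the $e_{ij}$ with $i\neq j$ is a coideal, whose orthogonal is the diagonal subalgebra. So the dichotomy $I\cap D\in\{0,D\}$ is unjustified as soon as $\dim D>1$. Second, the Heyneman--Radford theorem requires injectivity on $H_1=H_0\wedge H_0$, not on the coradical, and a coideal meeting $H_0$ trivially need not vanish. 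For example, in Sweedler's four-dimensional Hopf algebra the nonzero Hopf ideal $\operatorname{span}\{x,gx\}$ meets $H_0=k1\oplus kg$ trivially. The real content of the theorem is thus missing: you still have to exclude nonzero elements of $\ker S$ in non-pointed simple subcoalgebras and in $H_1$. This is where weak finiteness has to enter, on matrices for which the Hopf axioms give only one-sided inverses. For instance, if a subcomodule puts the invertible multiplicative matrix $S(X)^{T}$ into block upper triangular form, the axioms give only a left inverse of the first diagonal block and a right inverse of the second. Weak finiteness is what upgrades these to genuine inverses.
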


\begin{corollary}\label{cor:injective}
If a Hopf algebra has finite uniform dimension on either side, its
antipode is injective. This applies, in particular, under noetherianity
on either side.
\end{corollary}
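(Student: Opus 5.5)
The corollary follows by chaining Lemma \ref{lem:weakly-finite} with Theorem \ref{thm:skryabin}. Let \(H\) be a Hopf algebra over \(\kk\) and suppose \(\udim(H_H)<\infty\) or \(\udim({}_H H)<\infty\). Lemma \ref{lem:weakly-finite}, applied to the underlying ring \(R=H\), shows that \(H\) is weakly finite as a ring. Weak finiteness is a purely ring-theoretic property of the algebra \(H\), so it is exactly the hypothesis of Theorem \ref{thm:skryabin}. That theorem then yields that \(S\) is injective.

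For the noetherian clause, I will use the fact recalled in the preliminaries that every noetherian module has finite uniform dimension \cite[Corollary 6.7]{Lam99}. If \(H\) is right (respectively left) noetherian, then the regular module \(H_H\) (respectively \({}_H H\)) is noetherian, so it has finite uniform dimension. The first part of the corollary then applies. Alternatively, one can invoke the second sentence of Lemma \ref{lem:weakly-finite} directly.

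There is no real obstacle here. The only point to check is that the one-sided hypotheses match: Lemma \ref{lem:weakly-finite} accepts finiteness of uniform dimension on either side, and Theorem \ref{thm:skryabin} requires only weak finiteness, which is a two-sided condition on matrices. No left/right symmetry argument, such as passing to \(H^{\op}\) or \(H^{\cop}\), is therefore needed.
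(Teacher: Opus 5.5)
Your proposal is correct and follows the paper's proof exactly: the paper also just combines Lemma~\ref{lem:weakly-finite} (finite uniform dimension on either side, in particular noetherianity on either side, gives weak finiteness) with Skryabin's Theorem~\ref{thm:skryabin}.
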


\begin{proof}
Combine Lemma \ref{lem:weakly-finite} and Theorem \ref{thm:skryabin}.
\end{proof}

If \(S\) is the antipode of a Hopf algebra \(R\), its image \(B:=S(R)\)
is a Hopf subalgebra of $R$. 
We recall that \(S(xy)=S(y)S(x)\), \(S(1_{R})=1_{R}\), \(\Delta(S(x))=\sum S(x_{(2)})\otimes S(x_{(1)})\),
and \(\eps(S(x))=\eps(x)\). 
When \(S\) is injective,
\(S:R\to B\) is an anti-isomorphism, and hence
\begin{equation}\label{eq:antipode-lattice}
  \udim({}_B B)=\udim(R_R).
\end{equation}
This equality follows directly by transporting right ideals of \(R\)
to left ideals of \(B\); sums, intersections, and independence are
preserved. We refer the reader to \cite{Mo93,Ra12} for basic notions and results about Hopf algebras.

\section{Finite coefficient spaces and common denominators}

We now prove some technical results that will be useful later. In the next lemma, (i) is the fundamental theorem of coalgebras, see e.g.\ \cite[Theorem 2.2.3]{Ra12}, while (ii) is the coefficient-matrix construction of
\cite[Lemma 2.2.1 and Corollary 2.2.2]{Ra12}.
We include the proof to fix the coefficient conventions used below.

\begin{lemma}
\label{lem:coefficients}
Let \(R\) be a coalgebra. The following statements hold.
\begin{enumerate}
\item Every finite subset of \(R\) lies in a finite-dimensional
subcoalgebra.
\item Every nonzero finite-dimensional subcoalgebra \(C\subseteq R\)
is spanned by the entries of a matrix \(U=(u_{ij})_{ij}\) satisfying
\begin{equation}\label{eq:multiplicative}
  \Delta(u_{ij})=\sum_a u_{ia}\otimes u_{aj},
  \qquad \eps(u_{ij})=\delta_{ij}.
\end{equation}
Such a matrix will be called multiplicative.
\end{enumerate}
\end{lemma}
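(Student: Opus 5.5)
For (i) I will reduce to a single element. A finite sum of subcoalgebras is again a subcoalgebra, and a finite sum of finite-dimensional spaces is finite-dimensional. So it suffices to show that every \(x\in R\) lies in a finite-dimensional subcoalgebra. I will first find a finite-dimensional right coideal containing \(x\), and then produce the subcoalgebra from its coefficients. Write \(\Delta(x)=\sum_{i=1}^n x_i\otimes y_i\) with the \(y_i\) linearly independent, and let \(V\) be the span of the \(x_i\). Counitality gives \(x=\sum_i x_i\,\eps(y_i)\in V\).

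The step I expect to need the most care is checking that \(V\) is a left coideal. Coassociativity gives
\[
\sum_i \Delta(x_i)\otimes y_i=\sum_i x_i\otimes \Delta(y_i).
\]
Choose functionals \(f_j\) with \(f_j(y_i)=\delta_{ij}\), and apply \(\id\otimes\id\otimes f_j\) to both sides. This yields \(\Delta(x_j)\in R\otimes V\). Hence \(V\) is a finite-dimensional left coideal, that is, a finite-dimensional right \(R^*\)-stable subspace, equivalently a left comodule containing \(x\).

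Next I pick a basis \(v_1,\dots,v_m\) of \(V\) and write \(\Delta(v_j)=\sum_a u_{ja}\otimes v_a\), which defines a matrix \(U=(u_{ja})\). Coassociativity together with the linear independence of the \(v_a\) gives \(\Delta(u_{ij})=\sum_a u_{ia}\otimes u_{aj}\). Counitality gives \(v_j=\sum_a \eps(u_{ja})v_a\), so \(\eps(u_{ij})=\delta_{ij}\). Therefore the span \(D\) of the \(u_{ij}\) is a finite-dimensional subcoalgebra. It contains every \(v_j=\sum_a u_{ja}\,\eps(v_a)\) by counitality again, and hence it contains \(x\).

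For (ii), let \(C\) be a nonzero finite-dimensional subcoalgebra. Regard \(C\) itself as a left \(C\)-comodule via \(\Delta|_C\), which is possible since \(\Delta(C)\subseteq C\otimes C\). Applying the construction above with \(V=C\) and a basis \(v_1,\dots,v_m\) of \(C\), I write \(\Delta(v_j)=\sum_a u_{ja}\otimes v_a\) with \(u_{ja}\in C\). The computation above shows that \(U\) is multiplicative in the sense of \eqref{eq:multiplicative}. The span of its entries lies in \(C\), and it contains each \(v_j=\sum_a u_{ja}\eps(v_a)\), hence all of \(C\). So the entries of \(U\) span \(C\).

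The only genuinely delicate points are the index bookkeeping in the coassociativity argument and the right choice of the side of the coideal, so that the coefficients come out in the stated order \(u_{ia}\otimes u_{aj}\). If the indices come out transposed, I will replace \(U\) by its transpose, or expand \(\Delta(v_j)\) in the other tensor factor.
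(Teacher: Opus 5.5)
Your plan is the paper's (a finite-dimensional coideal spanned by tensor factors of $\Delta(x)$, then its coefficient matrix), and part (ii) is correct as written. However, the key step of part (i) is false as stated. Applying $\id\otimes\id\otimes f_j$ to $\sum_i\Delta(x_i)\otimes y_i=\sum_i x_i\otimes\Delta(y_i)$ gives
\[
  \Delta(x_j)=\sum_i x_i\otimes(\id\otimes f_j)\Delta(y_i)\in V\otimes R,
\]
not $\Delta(x_j)\in R\otimes V$. So the span of the left tensor factors is a \emph{right} coideal, exactly as in the paper, and in general it is not a left coideal. For example, take the $n\times n$ matrix coalgebra ($n\geq2$) with $\Delta(e_{ij})=\sum_a e_{ia}\otimes e_{aj}$ and $x=e_{11}$. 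Then $V=\Span\{e_{1a}\}$. Since the $e_{1b}$ are linearly independent, $\Delta(e_{12})=\sum_b e_{1b}\otimes e_{b2}\in R\otimes V$ would force $e_{22}\in V$, which fails. Hence the expansion $\Delta(v_j)=\sum_a u_{ja}\otimes v_a$, on which the rest of your part (i) rests, does not exist in general. Replacing $U$ by its transpose cannot help, because the problem is that the expansion does not exist, not the order of indices.

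The repair is the second option in your last sentence: keep $V$ and expand on the other side, $\Delta(v_j)=\sum_a v_a\otimes u_{aj}$. Comparing coefficients of $v_b$ in $(\Delta\otimes\id)\Delta(v_j)=(\id\otimes\Delta)\Delta(v_j)$ gives $\Delta(u_{bj})=\sum_a u_{ba}\otimes u_{aj}$. The right counit identity gives $\eps(u_{aj})=\delta_{aj}$. The left counit identity gives $v_j=\sum_a\eps(v_a)u_{aj}$, so the span of the $u_{ij}$ contains $V\ni x$. This is precisely the paper's argument, and no transposition is needed. Alternatively, take $V$ to be the span of the right factors $y_i$, after arranging the $x_i$ to be linearly independent. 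That space is a left coideal containing $x=\sum_i\eps(x_i)y_i$, and your computation then goes through verbatim. Part (ii) is unaffected: $\Delta(C)\subseteq C\otimes C$ allows the expansion on either side, and your coassociativity and counit computations for the indexing $\Delta(v_j)=\sum_a u_{ja}\otimes v_a$ are correct.
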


\begin{proof}
(i). Given \(x\in R\), write \(\Delta(x)=\sum_i a_i\otimes b_i\) with
the \(b_i\) linearly independent. The finite-dimensional space
\[
  V_x=\{(\id\otimes f)\Delta(x):f\in R^*\}
     =\Span\{a_i\}
\]
contains \(x\), by the counit identity. For every \(f\in R^*\), coassociativity gives
\[
  \Delta\bigl((\id\otimes f)\Delta(x)\bigr)
  =\sum_i a_i\otimes(\id\otimes f)\Delta(b_i)
  \in V_x\otimes R,
\]
thus \(\Delta(V_x)\subseteq V_x\otimes R\). A finite sum $V$ of such spaces
is a finite-dimensional right coideal containing any prescribed finite
subset. Choose a basis \(v_1,\ldots,v_m\) of this right coideal \(V\), and
write $\Delta(v_j)=\sum_i v_i\otimes u_{ij}$.
Coassociativity and the right counit identity give
\[
  \sum_{i,a}v_a\otimes u_{ai}\otimes u_{ij}
  =\sum_i v_i\otimes\Delta(u_{ij}),
  \qquad
  v_j=\sum_i v_i\eps(u_{ij}).
\]
Comparing the coefficients of the basis vectors \(v_i\) yields
\eqref{eq:multiplicative}. Thus the span of the \(u_{ij}\) is a
finite-dimensional subcoalgebra. The left counit identity gives $v_j=\sum_i\eps(v_i)u_{ij}$,
so this subcoalgebra contains \(V\).

(ii). Take \(V=C\). All its coefficient entries then belong to
\(C\), and the previous identity shows that they span \(C\).
\end{proof}

For a multiplicative matrix over a Hopf algebra with antipode \(S\),
the two antipode identities read
\begin{equation}\label{eq:matrix-inverse}
  U\,S(U)=S(U)\,U=I_m.
\end{equation}
There is no transposition in this identity. Here \(S(U)=(S(u_{ij}))_{ij}\) is defined entrywise: the
\((i,j)\)-entries of the two products are
\(\sum_a u_{ia}S(u_{aj})\) and
\(\sum_a S(u_{ia})u_{aj}\), respectively. These are precisely the
two antipode identities applied to \(u_{ij}\).

\begin{proposition}\label{prop:denominators}
Let \(R\) be a Hopf algebra with antipode \(S\), and set \(B:=S(R)\).
Assume \(\udim({}_B B)<\infty\). For every finite-dimensional
subcoalgebra \(C\subseteq R\), the space
\begin{equation}\label{eq:denominator-definition}
  \mathcal L_C=\{b\in B:bC\subseteq B\}
\end{equation}
is an essential left ideal of \(B\) and a right coideal of \(B\).
In particular, it contains an element \(b\) with \(\eps(b)=1\).
\end{proposition}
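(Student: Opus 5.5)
The plan is to verify the three properties of \(\mathcal L_C\) in turn: left ideal, essential, right coideal. The element with \(\eps(b)=1\) then comes for free. By Lemma \ref{lem:coefficients}(ii) we may fix a multiplicative matrix \(U=(u_{ij})\in\Mat_m(R)\) whose entries span \(C\) (the case \(C=0\) is trivial, since then \(\mathcal L_C=B\)). Thus \(b\in\mathcal L_C\) if and only if \(bu_{ij}\in B\) for all \(i,j\). Note that the entries of \(S(U)\) lie in \(B\), and \eqref{eq:matrix-inverse} holds. The left ideal property is immediate: if \(b\in\mathcal L_C\) and \(b'\in B\), then \(b'bC\subseteq b'B\subseteq B\).

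\emph{Essentiality.} Let \(M=B^m\) be the left \(B\)-module of row vectors. By additivity, \(\udim(M)=m\,\udim({}_BB)<\infty\). Consider the left \(B\)-linear map \(f\colon M\to M\), \(f(v)=v\,S(U)\), which is well defined because \(S(U)\) has entries in \(B\). It is injective: \(vS(U)=0\) gives \(v=vS(U)U=0\), computing in \(R^m\). By Lemma \ref{lem:essential-basics}(iii), \(E:=f(M)\) is essential in \(M\). Moreover every \(w=vS(U)\in E\) satisfies \(wU=v\in B^m\). Hence
\[
E\subseteq E':=\{w\in B^m: wU\in B^m\},
\]
and \(E'\) is an essential submodule of \(M\).

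Let \(e_i\) denote the \(i\)-th standard row vector. Then \(be_iU=(bu_{i1},\dots,bu_{im})\), so
\[
\mathcal L_C=\bigcap_i E_i,\qquad E_i:=\{b\in B: be_i\in E'\}.
\]
Each \(E_i\) is essential in \({}_BB\). Indeed, given \(0\neq x\in B\), the vector \(xe_i\) is nonzero, so some \(y\in B\) gives \(0\neq yxe_i\in E'\), i.e.\ \(0\neq yx\in E_i\). Lemma \ref{lem:essential-basics}(i) then shows that \(\mathcal L_C\) is essential.

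\emph{Right coideal.} This is the step needing an actual computation. The idea is to undo the coefficient \(u_{ij}\) on the second tensor leg by means of \(U\,S(U)=I_m\). Let \(b\in\mathcal L_C\). Since \(B\) is a Hopf subalgebra, \(\Delta(B)\subseteq B\otimes B\), so we may write \(\Delta(b)=\sum_k b_k\otimes c_k\) with \(b_k\in B\) and the \(c_k\in B\) linearly independent. For fixed \(i,j\) we compute
\[
\sum_a\Delta(bu_{ia})\bigl(1\otimes S(u_{aj})\bigr)
=\sum_{c}b_{(1)}u_{ic}\otimes b_{(2)}\sum_a u_{ca}S(u_{aj})
=\sum_k b_ku_{ij}\otimes c_k .
\]
The left-hand side lies in \(B\otimes B\), because \(bu_{ia}\in B\), \(\Delta(B)\subseteq B\otimes B\), and \(S(u_{aj})\in B\). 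Since the \(c_k\) are linearly independent, it follows that \(b_ku_{ij}\in B\) for every \(k\). Hence \(b_k\in\mathcal L_C\), and so \(\Delta(b)\in\mathcal L_C\otimes B\).

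\emph{Conclusion.} Finally, \(\mathcal L_C\) is nonzero, being essential in \(B\ni 1\). As a nonzero right coideal, it contains an element on which \(\eps\) does not vanish. Rescaling this element gives \(b\in\mathcal L_C\) with \(\eps(b)=1\).
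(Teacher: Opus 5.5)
Your proof is correct and follows the paper's argument essentially step by step: the injective left $B$-linear map $v\mapsto vS(U)$ on $B^{1\times m}$ combined with Lemma~\ref{lem:essential-basics}(iii), the coordinate left ideals $E_i$ whose intersection is $\mathcal L_C$, and the coideal identity obtained by multiplying $\Delta(bu_{ia})$ by $1\otimes S(u_{aj})$ followed by the linear-independence argument. The differences are only cosmetic: you use just the inclusion $E\subseteq E'$, whereas the paper proves equality, and you write the coideal computation with matrix coefficients instead of Sweedler notation for $c\in C$.
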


\begin{proof}
If \(C=0\), then \(\mathcal L_C=B\). Otherwise, choose a
multiplicative matrix \(U\) spanning \(C\), as in Lemma
\ref{lem:coefficients}. Set \(A:=S(U)\in\Mat_m(B)\).
Rows \(B^{1\times m}=\{(r_1,\ldots,r_m):r_i\in B\}\) are regarded as a left \(B\)-module. The map
\[
  f:{}_B B^{1\times m}\longrightarrow{}_B B^{1\times m},
  \quad r\longmapsto rA
\]
is left \(B\)-linear. It is injective since \(rA=0\) implies
\(r=rAU=0\) in \(R^{1\times m}\), using \eqref{eq:matrix-inverse}.
Its domain has finite uniform dimension, so $E:=B^{1\times m}A$
is essential by Lemma \ref{lem:essential-basics} (iii).

For each standard row \(e_i\), set
\[
  E_i:=\{b\in B:be_i\in E\}.
\]
It is an essential left ideal of $B$: for a nonzero left ideal \(I\subseteq B\),
the submodule \(Ie_i\) is nonzero and intersects \(E\) nontrivially.
Moreover, $be_i\in E$ if and only if $be_iU\in B^{1\times m}$.
Indeed, \(be_i=rA\) implies \(be_iU=r\); conversely,
\((be_iU)A=be_i\). Since the entries of \(U\) span \(C\), we get
\begin{equation}\label{eq:coordinate-intersection}
  \mathcal L_C=\bigcap_{i=1}^m E_i.
\end{equation}
It follows that \(\mathcal L_C\) is essential by Lemma \ref{lem:essential-basics} (i), and nonzero. It is
also visibly a left ideal, since \((db)C=d(bC)\subseteq B\) for
\(d\in B\) and \(b\in\mathcal L_C\).

For its coideal property, let \(b\in\mathcal L_C\) and \(c\in C\). We have that
\begin{equation}\label{eq:denominator-coideal}
\sum\Delta(bc_{(1)})(1\otimes S(c_{(2)}))=\sum b_{(1)}c_{(1)}\otimes b_{(2)}c_{(2)}S(c_{(3)})=\sum b_{(1)}c\otimes b_{(2)}.
\end{equation}
The left-hand side is in \(B\otimes B\): we have \(bC\subseteq B\),
\(\Delta(B)\subseteq B\otimes B\), and \(S(C)\subseteq B\).

Write $\Delta(b)=\sum_{\ell=1}^s p_\ell\otimes q_\ell$,
with \(p_\ell,q_\ell\in B\) and the \(q_\ell\) linearly independent.
For every \(c\in C\), \eqref{eq:denominator-coideal} gives
\(\sum_\ell p_\ell c\otimes q_\ell\in B\otimes B\). Let \(\pi:R\to R/B\) be the vector-space quotient map, and choose
\(\lambda_\ell\in B^*\) with
\(\lambda_\ell(q_j)=\delta_{\ell j}\). Applying
\(\pi\otimes\lambda_\ell\) to this tensor gives
\(\pi(p_\ell c)=0\), so \(p_\ell c\in B\).
As this holds for every \(c\in C\), each \(p_\ell\) belongs to
\(\mathcal L_C\). Therefore $\Delta(\mathcal L_C)\subseteq\mathcal L_C\otimes B$.

Since $\mathcal L_C$ is a nonzero right coideal, some element of \(\mathcal L_C\) has nonzero counit, and
rescaling gives counit one.
\end{proof}

We call a nonzero element \(b\in\mathcal L_C\)
a common left denominator for \(C\) relative to \(B\).

\begin{remark}\label{rem:denominator-scope}
The finite-uniform-dimension hypothesis is used on the specific left
module \({}_B B^{1\times m}\). No finiteness of \({}_B R\) is assumed. The matrix \(U\) is used in the ambient matrix ring \(\mathrm{M}_m(R)\); its entries are not assumed to belong to \(B\). 
\end{remark}

\section{Coideal annihilators and essentiality of the antipode image}

We combine the common-denominator construction with a coideal
annihilator identity. This makes the antipode image essential in the
ambient Hopf algebra as a left module over that image and transfers
finite uniform dimension from right to left.

\begin{lemma}\label{lem:coideal-annihilator}
Let \(R\) be a Hopf algebra with antipode \(S\).
\begin{enumerate}
\item If \(0\neq L\subseteq R\) is a right coideal, then
\[
  \rann_R(L):=\{h\in R:Lh=0\}\subseteq\ker S.
\]
\item If \(0\neq P\subseteq R\) is a left coideal, then
\[
  \lann_R(P):=\{h\in R:hP=0\}\subseteq\ker S.
\]
\end{enumerate}
In particular, the indicated annihilators vanish when \(S\) is injective.
\end{lemma}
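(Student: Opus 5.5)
For (i), the plan is to take \(h\in\rann_R(L)\) and an element \(x\in L\) with \(\eps(x)=1\); such an element exists because the counit cannot vanish identically on a nonzero right coideal. The aim is to reconstruct \(S(h)\) from products of the form \(x_{(1)}h\), which lie in \(Lh=0\).

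Using \(\Delta(x)\in L\otimes R\), we have \(x_{(1)}h=0\) in every Sweedler term. A cleaner way to say this is to write \(\Delta(x)=\sum_\ell p_\ell\otimes q_\ell\) with \(p_\ell\in L\). Then \(\sum p_\ell h\otimes q_\ell=0\). Since \(Lh=0\) and \(L\) is a right coideal, the whole coproduct \(\Delta(x)(h\otimes 1)\) should be expressible in a form that lets us peel off \(h\).

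The candidate identity, for \(x\in L\) and \(h\in R\), is
\[
\sum S(x_{(1)}h_{(1)})\,x_{(2)}h_{(2)}\,S(h_{(3)}) \;=\; \eps(x)\,S(h)\cdot(\ldots).
\]
The more direct computation is
\[
\sum S(h_{(1)})S(x_{(1)})\,x_{(2)}\;=\;\eps(x)S(h).
\]
However, we need products with \(h\) placed to the right of elements of \(L\). So instead we consider \((x h_{(1)})\) and note that \(\Delta(xh_{(1)})\) involves \(x_{(1)}h_{(1)}\), which is not obviously zero, since only \(Lh=0\) holds, not \(Lh_{(1)}=0\).

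Step one is therefore to show that \(\rann_R(L)\) is a left coideal. If \(Lh=0\), then \(\Delta(L)\Delta(h)=0\), so \(\sum x_{(1)}h_{(1)}\otimes x_{(2)}h_{(2)}=0\). Applying \((\id\otimes \mu)(\id\otimes\id\otimes S)\) after tensoring with \(\Delta\) on the second leg, in the standard way, gives
\[
\sum x_{(1)}h_{(1)}\otimes x_{(2)}h_{(2)}S(h_{(3)})=\sum x h_{(1)}\otimes h_{(2)}.
\]
Hence \(\sum x h_{(1)}\otimes h_{(2)}=0\) for every \(x\in L\). Writing \(\Delta(h)=\sum h'_j\otimes h''_j\) with the \(h''_j\) linearly independent, we get \(Lh'_j=0\) for all \(j\). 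Now apply \(\mu\circ(\id\otimes S)\) to \(\sum x h_{(1)}\otimes h_{(2)}=0\) in the other order. Specifically, multiply on the left by \(S\) of the \(L\)-part: the expression \(\sum S(x_{(1)})x_{(2)}h_{(1)}\otimes h_{(2)}\) is built from \(x_{(2)}\in R\), which is not in \(L\). To fix this, use \(x\) with \(\eps(x)=1\), and apply \(\eps\otimes\id\) to \(\sum xh_{(1)}\otimes h_{(2)}\) paired suitably. The cleanest route is
\[
0=\sum S(h_{(1)})\,S\bigl(x_{(1)}\bigr)\cdot\ldots
\]
In practice, we apply the map \(a\otimes b\mapsto S(b)\,\cdot\) to the tensor \(\sum x_{(1)}h_{(1)}\otimes x_{(2)}h_{(2)}=0\) in the form \(\sum S(x_{(2)}h_{(2)})\) after first contracting \(x\) using \(\Delta(x)\in L\otimes R\). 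This gives \(\sum \eps(x_{(1)}h_{(1)})S(x_{(2)}h_{(2)})=S(xh)\), which is trivially zero, so this contraction yields nothing new.

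The identity I will actually aim for is
\[
\sum S(x_{(2)}h_{(2)})\,x_{(1)}\ \text{-type expressions}\ =\ \eps(x)S(h),
\]
obtained from \(\sum S(h_{(2)})S(x_{(2)})\cdot\) with left factors \(x_{(1)}h_{(1)}\) replaced via the left-coideal property of \(\rann\). I expect this bookkeeping, namely getting a Sweedler identity whose every term contains a factor \(\ell h\) with \(\ell\in L\), to be the main obstacle. One way to guarantee such terms is to rewrite \(x_{(1)}h_{(1)}\) using \(S^{-1}\)-free identities, for instance
\[
\sum x_{(1)}\otimes S(x_{(2)}) h\ \ \text{versus}\ \ \sum x_{(1)}h\otimes\ldots .
\]
Part (ii) follows from (i) applied to the Hopf algebra \(R^{\op,\cop}\). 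Its antipode is still \(S\), its left coideals correspond to right coideals, and its right annihilators correspond to left annihilators. The final claim is then immediate.
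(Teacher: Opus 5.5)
There is a genuine gap: your argument for (i) never closes. You end by describing only the shape of the target identity (``$\ldots$-type expressions'') and saying the bookkeeping is the main obstacle, so (i) is not proved. The ``step one'' computation is also wrong. Since $\sum h_{(2)}S(h_{(3)})=\eps(h_{(2)})1$, one has $\sum x_{(1)}h_{(1)}\otimes x_{(2)}h_{(2)}S(h_{(3)})=\sum x_{(1)}h\otimes x_{(2)}$, not $\sum xh_{(1)}\otimes h_{(2)}$. Already for $x=1$, the left side is $h\otimes 1$ while your right side would be $\Delta(h)$. The correct right-hand side vanishes trivially, because the first legs of $\Delta(x)$ can be taken in $L$. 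So this detour gives no information. The claimed coideal property of $\rann_R(L)$ is not established, and it is not needed anyway.

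The missing idea is simply that $S$ is an anti-homomorphism. You had essentially the right identity before discarding it; the factor there should be $S(h)$, not $S(h_{(1)})$. Take $\eps(x)=1$ and $\Delta(x)=\sum_\ell p_\ell\otimes q_\ell$ with $p_\ell\in L$. Then
\[
S(h)=S(h)\,\eps(x)1=\sum_\ell S(h)S(p_\ell)\,q_\ell=\sum_\ell S(p_\ell h)\,q_\ell=0,
\]
because $p_\ell h\in Lh=0$. You objected that one needs ``products with $h$ placed to the right of elements of $L$''; the equality $S(h)S(p_\ell)=S(p_\ell h)$ supplies exactly that. This is the paper's one-line proof of (i).

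Your reduction of (ii) to (i) via $R^{\op,\cop}$ is correct. The antipode is the same map, left coideals become right coideals, and left annihilators become right annihilators. It is a legitimate alternative to the paper's direct mirror computation $S(h)=\sum_i a_iS(b_i)S(h)$ with $b_i\in P$. However, it only yields a proof once (i) has actually been established.
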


\begin{proof}

(i). We already know that \(\eps|_L\) cannot vanish. 
Choose and rescale \(\ell\in L\) so that \(\eps(\ell)=1\). Write
\[
  \Delta(\ell)=\sum_i a_i\otimes b_i,\qquad a_i\in L.
\]
If \(Lh=0\), then \(a_i h=0\), so
\(S(h)S(a_i)=S(a_i h)=0\). Consequently, we get
\[
  S(h)=S(h)\sum_i S(a_i)b_i=\sum_i[S(h)S(a_i)]b_i=0.
\]
(ii). Choose \(p\in P\) with \(\eps(p)=1\), and write \(\Delta(p)=\sum_i a_i\otimes b_i\) with \(b_i\in P\). If \(hP=0\), then \(S(b_i)S(h)=0\), and hence $S(h)=\sum_i a_i S(b_i)S(h)=0$.
\end{proof}

\begin{proposition}\label{prop:essential-image-extension}
Let \(H\) be a Hopf algebra with injective antipode \(S\), and set \(K:=S(H)\).
Suppose that, for every finite-dimensional subcoalgebra \(C\subseteq H\),
there is a nonzero right coideal \(D_C\) of \(H\) such that
\[
  D_C\subseteq K,\qquad D_CC\subseteq K.
\]
Then \({}_K K\) is an essential submodule of \({}_K H\).
\end{proposition}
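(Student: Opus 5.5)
We verify the elementwise form of essentiality recalled in the preliminaries. Given $0\neq h\in H$, we must find $k\in K$ with $0\neq kh\in K$. By Lemma~\ref{lem:coefficients}(i), $h$ lies in some finite-dimensional subcoalgebra $C\subseteq H$. Let $D_C$ be the right coideal given by the hypothesis. Then $D_C\subseteq K$ and $D_Ch\subseteq D_CC\subseteq K$. So every element $d\in D_C$ is a candidate multiplier, and each product $dh$ already lies in $K$. It remains to show that some $d\in D_C$ gives $dh\neq 0$.

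For this we use Lemma~\ref{lem:coideal-annihilator}(i). Since $D_C$ is a nonzero right coideal of $H$, we have $\rann_H(D_C)\subseteq\ker S$. As $S$ is injective, $\rann_H(D_C)=0$. Because $h\neq0$, we conclude $D_Ch\neq0$, so we may choose $d\in D_C$ with $dh\neq 0$. Taking $k:=d\in K$ gives $0\neq kh\in K$, and hence $K$ is essential in ${}_KH$.

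The step that needs care is matching the coideal hypothesis to the lemma: $D_C$ must be a right coideal of $H$, so that Lemma~\ref{lem:coideal-annihilator}(i) applies with $R=H$. A right coideal of $K$ would also work, since $\Delta(D_C)\subseteq D_C\otimes K\subseteq D_C\otimes H$. Beyond this, the argument is direct. The multiplier $d$ lies in $K$ itself, so ${}_K$-linearity of the relevant cyclic submodule $Kh$ poses no issue, and the intersection $Kh\cap K$ contains $dh\neq0$.
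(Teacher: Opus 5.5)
Your proof is correct and is essentially the paper's own argument: you embed $h$ in a finite-dimensional subcoalgebra $C$, use Lemma~\ref{lem:coideal-annihilator}(i) with injectivity of $S$ to get $\rann_H(D_C)=0$, and choose $d\in D_C\subseteq K$ with $0\neq dh\in K$. The only cosmetic difference is that you use the elementwise criterion for essentiality, whereas the paper starts from an arbitrary nonzero left $K$-submodule $X$ and an element $0\neq h\in X$.
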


\begin{proof}
Let \(X\subseteq H\) be a nonzero left \(K\)-submodule, and choose
\(0\neq h\in X\). By Lemma \ref{lem:coefficients}, there is a
finite-dimensional subcoalgebra \(C\subseteq H\) containing \(h\).
By Lemma \ref{lem:coideal-annihilator} (i) and injectivity of \(S\),
\(\rann_H(D_C)=0\). Therefore \(D_Ch\neq0\), and we may choose
\(a\in D_C\) such that \(ah\neq0\).

The product \(ah\) belongs to \(X\), because \(a\in K\) and
\(X\) is a left \(K\)-submodule. It belongs to \(K\), because
\(h\in C\) and \(D_CC\subseteq K\). Thus
\(0\neq ah\in X\cap K\), proving essentiality.
\end{proof}

\begin{proposition}
\label{prop:uniform-bound}
Let \(H\) be a Hopf algebra with
\(d=\udim(H_H)<\infty\), and set \(K:=S(H)\). Then
\begin{equation}\label{eq:uniform-bound}
  \udim({}_H H)\leq\udim({}_K H)
  =\udim({}_K K)=d.
\end{equation}
\end{proposition}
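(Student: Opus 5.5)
Taking $R=H$ and $B=K=S(H)$, the plan is to chain Corollary \ref{cor:injective}, \eqref{eq:antipode-lattice}, Proposition \ref{prop:denominators} and Proposition \ref{prop:essential-image-extension}, and then finish with Lemma \ref{lem:essential-basics}.

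First, since $\udim(H_H)=d<\infty$, Corollary \ref{cor:injective} shows that $S$ is injective. Then \eqref{eq:antipode-lattice} gives $\udim({}_K K)=\udim(H_H)=d$, which is the final equality in \eqref{eq:uniform-bound}.

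Next, Proposition \ref{prop:denominators} applies, because its hypothesis $\udim({}_K K)<\infty$ now holds. For each finite-dimensional subcoalgebra $C\subseteq H$ it yields a nonzero essential left ideal $\mathcal L_C\subseteq K$ that is a right coideal of $K$ and satisfies $\mathcal L_C C\subseteq K$. Since $\Delta(\mathcal L_C)\subseteq \mathcal L_C\otimes K\subseteq \mathcal L_C\otimes H$, it is also a right coideal of $H$. We set $D_C:=\mathcal L_C$; then $D_C\subseteq K$ and $D_C C\subseteq K$. Proposition \ref{prop:essential-image-extension} now shows that ${}_K K$ is essential in ${}_K H$. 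By Lemma \ref{lem:essential-basics}(ii), applied to left $K$-modules, $\udim({}_K H)=\udim({}_K K)=d$.

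It remains to prove $\udim({}_H H)\le \udim({}_K H)$. Every left ideal of $H$ is in particular a left $K$-submodule of $H$. An independent family of nonzero left ideals of $H$ is therefore an independent family of nonzero $K$-submodules, because directness of a sum is a statement about the underlying abelian groups. Taking suprema gives the inequality.

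No step here seems to be a real obstacle, since the substantive work was done in Propositions \ref{prop:denominators} and \ref{prop:essential-image-extension}. The only point that needs care is that $\mathcal L_C$ is a right coideal of $K$ but must be a right coideal of $H$. This is needed because Lemma \ref{lem:coideal-annihilator}(i) is applied in $H$ inside Proposition \ref{prop:essential-image-extension}. It follows immediately from the inclusion $K\subseteq H$ above.
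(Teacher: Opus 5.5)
Your proposal is correct and follows essentially the same route as the paper's proof. The paper also gets injectivity from Corollary~\ref{cor:injective} and $\udim({}_K K)=d$ from \eqref{eq:antipode-lattice}. It then uses $\mathcal L_C$ from Proposition~\ref{prop:denominators} as $D_C$ in Proposition~\ref{prop:essential-image-extension}, noting, as you do, that $\mathcal L_C$ is a right coideal of $H$. It finishes with Lemma~\ref{lem:essential-basics}(ii) and restriction of scalars for the inequality.
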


\begin{proof}
The antipode is injective by Corollary \ref{cor:injective}. The anti-isomorphism
\(S:H\to K\) gives \(\udim({}_K K)=d\) by
\eqref{eq:antipode-lattice}. Thus Proposition \ref{prop:denominators} applies to
\(R=H\) and \(B=K\).
For every finite-dimensional subcoalgebra \(C\subseteq H\), it gives
a nonzero space
\[
  \mathcal L_C=\{a\in K:aC\subseteq K\},
  \qquad \Delta(\mathcal L_C)\subseteq \mathcal L_C\otimes K.
\]
In particular, \(\mathcal L_C\) is a right coideal of \(H\) itself and
satisfies the hypotheses for \(D_C\) in Proposition
\ref{prop:essential-image-extension}. Hence
\({}_K K\subseteq{}_K H\) is essential.
By Lemma \ref{lem:essential-basics} (ii),
\[
  \udim({}_K H)=\udim({}_K K)=d.
\]
Finally, every left ideal of \(H\) is a left \(K\)-submodule of
the same underlying space \(H\), and independence of a family of
such ideals is unchanged by restriction of scalars. This proves the
first inequality in \eqref{eq:uniform-bound}.
\end{proof}

\begin{remark}\label{rem:new-step}
The bound \eqref{eq:uniform-bound} is obtained before applying any
second denominator construction. It does not assume finite left
uniform dimension of \(H\), left noetherianity of \(H\), or
noetherianity of \({}_K H\). The relevant annihilator is the common
annihilator \(\rann_H(\mathcal L_C)\), not the annihilator of an individual
chosen denominator. This is what ensures a nonzero product inside
\(X\cap K\).
\end{remark}

\section{Restricted inverse and the second denominator}

We use an identity for the inverse of the antipode on its image,
together with a second denominator construction. This proves
bijectivity under finite uniform dimension on both sides.

\begin{lemma}\label{lem:restricted-inverse}
Let \(R\) be a Hopf algebra with injective antipode \(S\).
Let \(L\subseteq R\) be a left ideal and a right coideal.
If \(L\cap S(R)\neq0\), then \(L=R\).
\end{lemma}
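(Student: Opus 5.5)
The plan is to show that \(1\in L\). Since \(L\) is a left ideal, this gives \(L=R\). Fix \(0\neq x\in L\cap S(R)\) and write \(x=S(y)\) with \(y\in R\). Because \(S\) is injective, the inverse \(S^{-1}\colon S(R)\to R\) is well defined on the image. I will use it only on the middle tensor leg, where the relevant components lie in \(S(R)\).

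First, compute the iterated coproduct of \(x\) in two ways. Using \(\Delta(S(y))=\sum S(y_{(2)})\otimes S(y_{(1)})\) twice gives
\[
  (\Delta\otimes\id)\Delta(x)=\sum S(y_{(3)})\otimes S(y_{(2)})\otimes S(y_{(1)})\in R\otimes S(R)\otimes R .
\]
On the other hand, \(L\) is a right coideal, so \((\Delta\otimes\id)\Delta(L)\subseteq L\otimes R\otimes R\). Subspaces of a tensor product of vector spaces satisfy \((L\otimes R\otimes R)\cap(R\otimes S(R)\otimes R)=L\otimes S(R)\otimes R\); choose bases adapted to \(L\subseteq R\) and \(S(R)\subseteq R\) to see this. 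Hence the element above lies in \(L\otimes S(R)\otimes R\).

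Next, consider the linear map
\[
  \Phi\colon R\otimes S(R)\otimes R\to R\otimes R,\qquad a\otimes b\otimes c\mapsto S^{-1}(b)\,a\otimes c .
\]
It is well defined because it is trilinear in \((a,b,c)\), and \(S^{-1}\) is linear on \(S(R)\). Since \(L\) is a left ideal, \(\Phi\) maps \(L\otimes S(R)\otimes R\) into \(L\otimes R\). Applying \(\Phi\) to the expression above, the middle leg is \(S(y_{(2)})\), with \(S^{-1}(S(y_{(2)}))=y_{(2)}\). This gives
\[
  \sum y_{(2)}S(y_{(3)})\otimes S(y_{(1)})=\sum \eps(y_{(2)})1\otimes S(y_{(1)})=1\otimes x .
\]
Thus \(1\otimes x\in L\otimes R\). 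Since \(x\neq0\), applying \(\id\otimes\lambda\) for some \(\lambda\in R^*\) with \(\lambda(x)=1\) yields \(1\in L\).

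The only delicate point is justifying that \(S^{-1}\) can be applied legwise. This requires the tensor \((\Delta\otimes\id)\Delta(x)\) to lie simultaneously in \(L\otimes R\otimes R\) and in \(R\otimes S(R)\otimes R\), and requires the intersection of these to be \(L\otimes S(R)\otimes R\). Both facts are elementary linear algebra once stated, so I do not expect a real obstacle. The main thing to watch is the correct Sweedler ordering for \(\Delta\circ S\), so that the antipode identity \(\sum y_{(2)}S(y_{(3)})=\eps(y_{(2)})\) is the one that appears.
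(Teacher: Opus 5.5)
Your proof is correct and follows essentially the paper's route. The paper also applies the identity $\sum S^{-1}(y_{(2)})y_{(1)}=\varepsilon(y)1$ on $S(R)$ to the coproduct of an element of $L\cap S(R)$. It uses a tensor-intersection identity over the field to put the legs in $L$ and $S(R)$, and the left-ideal property to absorb $S^{-1}$. The only difference is that the paper first shows $L\cap S(R)$ is a right coideal of $S(R)$ so that it can pick $\bar y$ with $\varepsilon(\bar y)=1$. Your extra tensor leg carries $x$ itself, giving $1\otimes x\in L\otimes R$ directly and skipping that normalization step.
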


\begin{proof}
Set \(B:=S(R)\). Over a field $k$, the tensor intersection identity $(L\otimes R)\cap(B\otimes B)=(L\cap B)\otimes B$
holds inside \(R\otimes R\). In fact, restrict the map
\((R\to R/L)\otimes\id\) to \(B\otimes B\); its kernel is
\((L\cap B)\otimes B\). Indeed, the kernel of \(B\to R/L\) is \(L\cap B\),
and tensoring over the field \(\kk\) preserves kernels and the
inclusion \(B\subseteq R\). It follows that $\Delta(L\cap B)\subseteq(L\cap B)\otimes B$. Thus the nonzero intersection $L\cap B$ is a right coideal of \(B\), and we
can choose \(\bar{y}\in L\cap B\) with \(\eps(\bar{y})=1\).

We consider the inverse linear map
\(S^{-1}:B\to R\). For every \(y\in B\), writing \(y=S(x)\), one has
\begin{equation}\label{eq:restricted-inverse-identity}
  \sum S^{-1}(y_{(2)})y_{(1)}=\sum S^{-1}(S(x_{(1)}))S(x_{(2)})
  =\sum x_{(1)}S(x_{(2)})=\eps(x)1=\eps(y)1.
\end{equation}
For the chosen \(\bar{y}\), write
\(\Delta(\bar{y})=\sum_i y_i\otimes z_i\), where
\(y_i\in L\cap B\) and \(z_i\in B\). Thus every \(S^{-1}(z_i)\) is defined and belongs to \(R\). Since \(L\) is
a left ideal, \eqref{eq:restricted-inverse-identity} gives $1=\sum_i S^{-1}(z_i)y_i\in L$, so $L=R$.
\end{proof}

\begin{proposition}
\label{prop:two-stage}
Let \(H\) be a Hopf algebra with antipode $S$ for which both
\(\udim(H_H)\) and \(\udim({}_H H)\) are finite.
Then its antipode is bijective.
\end{proposition}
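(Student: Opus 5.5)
We prove injectivity first, then surjectivity by showing that every finite-dimensional subcoalgebra \(C\subseteq H\) lies in \(K:=S(H)\). The tools are a second denominator construction one level down, inside \(K\) with image \(K':=S(K)=S^2(H)\), and Lemma \ref{lem:restricted-inverse} applied to the Hopf algebra \(K\).

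Injectivity of \(S\) is Corollary \ref{cor:injective}. So \(S:H\to K\) is an anti-isomorphism, and \(K\) is a Hopf subalgebra with injective antipode \(S|_K\), whose image is \(K'\). We record two uniform dimensions.
\begin{enumerate}
\item By \eqref{eq:antipode-lattice}, \(\udim({}_K K)=\udim(H_H)<\infty\).
\item Transporting left ideals of \(H\) to right ideals of \(K\) gives \(\udim(K_K)=\udim({}_H H)<\infty\).
\end{enumerate}
Applying \eqref{eq:antipode-lattice} to \(K\) in place of \(H\), using (ii), gives \(\udim({}_{K'}K')=\udim(K_K)<\infty\). This finiteness is the point where the left-sided hypothesis on \(H\) enters.

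Now Proposition \ref{prop:denominators} applies to \(R=K\) and \(B=K'\). For every finite-dimensional subcoalgebra \(C'\subseteq K\), it gives a nonzero right coideal
\[
\mathcal L'_{C'}=\{b\in K':bC'\subseteq K'\}.
\]
Since \(\Delta(\mathcal L'_{C'})\subseteq\mathcal L'_{C'}\otimes K'\), this is a right coideal of \(K\) contained in \(K'\) with \(\mathcal L'_{C'}C'\subseteq K'\). Hence Proposition \ref{prop:essential-image-extension}, applied to the Hopf algebra \(K\) with injective antipode, shows that \({}_{K'}K'\) is essential in \({}_{K'}K\).

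Fix a finite-dimensional subcoalgebra \(C\subseteq H\). Proposition \ref{prop:denominators}, with \(R=H\), \(B=K\) and using (i), gives \(\mathcal L_C=\{b\in K:bC\subseteq K\}\). It is a nonzero left ideal of \(K\) and a right coideal of \(K\). In particular it is a nonzero left \(K'\)-submodule of \(K\), so by the essentiality just proved, \(\mathcal L_C\cap K'\neq0\), that is, \(\mathcal L_C\cap S(K)\neq0\). Lemma \ref{lem:restricted-inverse}, applied to \(R=K\) and \(L=\mathcal L_C\), then forces \(\mathcal L_C=K\). Hence \(1\in\mathcal L_C\), which means \(C\subseteq K\). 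By Lemma \ref{lem:coefficients} (i), every element of \(H\) lies in such a \(C\), so \(H=K\) and \(S\) is surjective.

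The step needing most care is checking that the hypotheses genuinely hold one level down. First, \(\udim({}_{K'}K')\) must be finite; this is exactly where two-sided finiteness is needed, through the chain \({}_{K'}K'\leftrightarrow K_K\leftrightarrow{}_H H\). Second, \(\mathcal L_C\) must be a right coideal of \(K\) itself and not merely of \(H\) inside \(K\). Proposition \ref{prop:denominators} gives \(\Delta(\mathcal L_C)\subseteq\mathcal L_C\otimes K\), which is exactly what Lemma \ref{lem:restricted-inverse} requires over \(R=K\).
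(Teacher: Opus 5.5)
Your proof is correct. It differs from the paper's only in how you produce a nonzero element of \(\mathcal L_C\cap S(K)\).

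\textbf{The paper's route.} It does this directly:
\begin{enumerate}
\item pick \(a\in\mathcal L_C\) with \(\eps(a)=1\), and a finite-dimensional subcoalgebra \(C'\subseteq K\) containing \(a\);
\item from Proposition~\ref{prop:denominators}, applied to \(R=K\) and \(B=S^2(H)\), take \(b\in S^2(H)\) with \(bC'\subseteq S^2(H)\) and \(\eps(b)=1\);
\item then \(ba\) lies in \(\mathcal L_C\cap S^2(H)\), and it is nonzero simply because \(\eps(ba)=1\).
\end{enumerate}

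\textbf{Your route.} You rerun the argument of Proposition~\ref{prop:uniform-bound} one level down. You feed the second family of denominator ideals into Proposition~\ref{prop:essential-image-extension} for the Hopf algebra \(K\), obtaining that \(S^2(H)\) is essential in \(K\) as a left \(S^2(H)\)-module. Then you intersect with the nonzero submodule \(\mathcal L_C\).

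\textbf{Comparison.} Unwound, your argument starts from an arbitrary \(0\neq h\in\mathcal L_C\). To find a denominator \(b\) with \(bh\neq0\), it needs Lemma~\ref{lem:coideal-annihilator} together with injectivity of \(S|_K\). The paper's normalization \(\eps(a)=\eps(b)=1\) makes that annihilator step unnecessary. So the paper's route is shorter and uses only the multiplicativity of the counit. Your route, in exchange, yields a stronger intermediate fact: \(S^2(H)\) is essential in \(K\) as a left \(S^2(H)\)-module. It also makes the parallel with Proposition~\ref{prop:uniform-bound} explicit.
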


\begin{proof}
By Corollary \ref{cor:injective}, \(S\) is injective. Define
\[
  K:=S(H),\qquad J:=S(K)=S^2(H).
\]
Both are Hopf subalgebras of $H$. The anti-isomorphism \(S:H\to K\)
and the isomorphism \(S^2:H\to J\) give
\begin{equation}\label{eq:two-left-dimensions}
  \udim({}_K K)=\udim(H_H)<\infty,
  \qquad
  \udim({}_J J)=\udim({}_H H)<\infty.
\end{equation}
Fix a nonzero finite-dimensional subcoalgebra \(C\subseteq H\).
Apply Proposition \ref{prop:denominators} to \(R=H\) and \(B=K\).
It gives an essential left ideal and right coideal of $K$
\[
  \mathcal L_C=\{a\in K:aC\subseteq K\}.
\]
Choose \(a\in \mathcal L_C\) with \(\eps(a)=1\), and choose a
finite-dimensional subcoalgebra \(C'\subseteq K\) containing \(a\).

Apply Proposition \ref{prop:denominators} again, to the Hopf algebra
\(R=K\) which has antipode \(S|_K\), and its image \(B=J\).
The second finiteness assertion in \eqref{eq:two-left-dimensions}
is precisely the hypothesis needed for this application.
It yields an element \(b\in J\) such that
\[
  bC'\subseteq J,\qquad \eps(b)=1.
\]
The product \(ba\) has three distinct properties:
\begin{enumerate}
\item \(ba\in \mathcal L_C\), since \(b\in J\subseteq K\) and \(\mathcal L_C\)
is a left ideal of \(K\);
\item \(ba\in J\), since \(a\in C'\) and \(bC'\subseteq J\);
\item \(ba\neq0\), since \(\eps(ba)=\eps(b)\eps(a)=1\).
\end{enumerate}
Consequently, we have obtained
\[
  0\neq ba\in\mathcal L_C\cap J=\mathcal L_C\cap S(K).
\]
The antipode $S|_K$ of $K$ is injective and has image $J$. We apply Lemma \ref{lem:restricted-inverse} to \(R=K\) and \(L=\mathcal L_C\) (the restricted inverse used in that lemma is therefore \(\left(S|_K\right)^{-1}:J\longrightarrow K\)). Since \(\mathcal L_C\) is a left ideal and a right coideal of \(K\), and $\mathcal{L}_C\cap S(K)\not=0$, the lemma gives \(\mathcal L_C=K\). Hence \(1\in \mathcal L_C\), and therefore \(C\subseteq K\).

Every element of \(H\) is contained in a finite-dimensional
subcoalgebra by Lemma \ref{lem:coefficients}. Therefore \(H=K=S(H)\),
and \(S\) is bijective.
\end{proof}

\section{The one-sided conclusion}

We combine the preceding results to prove bijectivity from finite
uniform dimension on at least one side. In particular, this solves
Skryabin's conjecture, posed in \cite{Sk06}.

\begin{theorem}\label{thm:main}
Let \(H\) be a Hopf algebra over an arbitrary field.
If the regular module of \(H\) has finite uniform dimension on
either side, then its antipode is bijective.
In particular, every right noetherian Hopf algebra and every left
noetherian Hopf algebra has bijective antipode.
\end{theorem}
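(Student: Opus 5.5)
The plan is to reduce the one-sided statement to Proposition \ref{prop:two-stage}, which needs finite uniform dimension on both sides. There are two cases, according to which side is finite.

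\emph{Case 1: \(\udim(H_H)<\infty\).} Proposition \ref{prop:uniform-bound} gives directly
\[
  \udim({}_H H)\le\udim(H_H)<\infty .
\]
So both one-sided uniform dimensions of \(H\) are finite, and Proposition \ref{prop:two-stage} yields bijectivity of \(S\).

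\emph{Case 2: \(\udim({}_H H)<\infty\).} I would pass to the opposite-coopposite Hopf algebra \(H^{\op\,\cop}\). It has the same underlying space, opposite multiplication and opposite comultiplication, and its antipode is the same linear map \(S\). To check this, note that the convolution identities \(\sum S(h_{(1)})h_{(2)}=\eps(h)1\) and \(\sum h_{(1)}S(h_{(2)})=\eps(h)1\) are exchanged when both the product and the coproduct are reversed. Left ideals of \(H\) are exactly right ideals of \(H^{\op\,\cop}\), so
\[
  \udim\bigl((H^{\op\,\cop})_{H^{\op\,\cop}}\bigr)=\udim({}_H H)<\infty .
\]
Case 1 applied to \(H^{\op\,\cop}\) then shows that \(S\) is bijective.

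The only point needing care is Case 2: I must confirm that the antipode of \(H^{\op\,\cop}\) really is \(S\) and not \(S^{-1}\), which is the antipode of \(H^{\op}\) or \(H^{\cop}\) alone and whose existence is not yet known. The computation above settles this. Alternatively, one could rerun the arguments of Propositions \ref{prop:denominators}--\ref{prop:uniform-bound} with left and right interchanged, using Lemma \ref{lem:coideal-annihilator}~(ii) and left coideals. The \(\op\,\cop\) device avoids this duplication.

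Finally, the noetherian statement follows because a right (respectively left) noetherian ring has finite right (respectively left) uniform dimension, as recalled in the preliminaries via \cite[Corollary 6.7]{Lam99}.
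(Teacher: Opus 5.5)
Your proposal is correct and follows the paper's proof essentially verbatim. The right-sided case is handled by Proposition~\ref{prop:uniform-bound} followed by Proposition~\ref{prop:two-stage}, the left-sided case by passing to $H^{\op,\cop}$ (whose antipode is the same map $S$), and the noetherian assertions by the fact that noetherian modules have finite uniform dimension.
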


\begin{proof}
Suppose first that \(\udim(H_H)<\infty\).
By Proposition \ref{prop:uniform-bound}, also \(\udim({}_H H)<\infty\).
The conclusion follows from Proposition \ref{prop:two-stage}.

If \(\udim({}_H H)<\infty\), consider
\(\widetilde H=H^{\op,\cop}\). This is a Hopf algebra with antipode given by the same linear map \(S\). 
The underlying ring is \(H^{\op}\), so we have
\[
  \udim(\widetilde H_{\widetilde H})=\udim({}_H H)<\infty.
\]
The right-sided case applied to \(\widetilde H\) shows that the same
linear map \(S\) is bijective.

Finally, noetherianity on a specified side implies finite uniform
dimension on that side, so both noetherian assertions follow.
\end{proof}

\begin{corollary}\label{cor:symmetry}
Let $H$ be a Hopf algebra over an arbitrary field.
\begin{enumerate}
\item[(i)]
The module ${}_H H$ has finite uniform dimension if and only
if $H_H$ does. When these dimensions are finite, they are equal.

\item[(ii)]
$H$ is right noetherian if and only if it is
left noetherian.
\end{enumerate}
\end{corollary}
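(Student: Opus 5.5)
For part (i), I will first assume \(\udim(H_H)=d<\infty\) and apply Proposition~\ref{prop:uniform-bound} directly. It gives \(\udim({}_H H)\leq d\), so the left uniform dimension is finite and bounded by the right one. For the reverse direction I pass to \(\widetilde H=H^{\op,\cop}\), exactly as in the proof of Theorem~\ref{thm:main}. This is a Hopf algebra with the same antipode, and its right regular module is the left regular module of \(H\), so \(\udim(\widetilde H_{\widetilde H})=\udim({}_H H)\) and \(\udim({}_{\widetilde H}\widetilde H)=\udim(H_H)\). If \(\udim({}_H H)<\infty\), Proposition~\ref{prop:uniform-bound} applied to \(\widetilde H\) gives \(\udim(H_H)\leq\udim({}_H H)\).

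The two cases together give the equivalence of finiteness. When both dimensions are finite, the two inequalities hold simultaneously and force equality. I only need to check that Proposition~\ref{prop:uniform-bound} really applies verbatim to \(\widetilde H\). It requires nothing beyond \(\widetilde H\) being a Hopf algebra with finite right uniform dimension, and that is standard.

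For part (ii), I will use the bijectivity of the antipode supplied by Theorem~\ref{thm:main}. Suppose \(H\) is right noetherian. Then \(S\) is bijective, hence an anti-automorphism of the ring \(H\). An anti-automorphism carries right ideals to left ideals bijectively, preserving inclusions, so it carries ascending chains of right ideals to ascending chains of left ideals, and conversely via \(S^{-1}\). Therefore the ACC on right ideals transfers to the ACC on left ideals, and \(H\) is left noetherian. The converse is identical, with the theorem applied under left noetherianity.

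The only subtle point is that ring-theoretic left/right symmetry needs surjectivity of \(S\). Injectivity alone only identifies \(H^{\op}\) with the subring \(S(H)\), which says nothing about ideals of \(H\) itself. This is exactly what Theorem~\ref{thm:main} provides, so I expect no real obstacle. The same anti-automorphism argument also gives an alternative proof of the equality in (i), since \(S\) transports the lattice of right ideals isomorphically onto the lattice of left ideals.
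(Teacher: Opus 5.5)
Your proof is correct. For (ii) it coincides with the paper's argument. For (i) you take a different route.

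The paper proves (i) by invoking Theorem~\ref{thm:main} to make $S$ an anti-automorphism of $H$. It then transports the lattice of right ideals onto the lattice of left ideals, which gives finiteness on both sides and equality in one stroke. You instead apply Proposition~\ref{prop:uniform-bound} to $H$ and to $H^{\op,\cop}$, which has the same antipode and swapped regular modules. This yields $\udim({}_H H)\le\udim(H_H)$ when the right side is finite, and the reverse inequality when the left side is finite. Together these give both the equivalence of finiteness and the equality.

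Your route is more economical for (i). It uses only Skryabin's injectivity and the essentiality of $K$ in ${}_K H$ from Section~4. It therefore shows that the left--right symmetry of uniform dimension is independent of the surjectivity argument (Lemma~\ref{lem:restricted-inverse} and Proposition~\ref{prop:two-stage}). The paper's route is shorter once Theorem~\ref{thm:main} is available, and it handles (i) and (ii) by the same lattice mechanism. You correctly observe that (ii) genuinely needs surjectivity of $S$, and there you use it exactly as the paper does.
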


\begin{proof}
(i). If either uniform dimension is finite, Theorem \ref{thm:main} shows that $S$ is bijective.
As an anti-automorphism of $H$, it identifies the lattices
of right and left ideals, preserving sums and intersections.
It therefore preserves independence of families of ideals,
giving equality of the two uniform dimensions.

(ii). Noetherianity on either side again implies
bijectivity of $S$ by Theorem \ref{thm:main}. The same correspondence between right and left ideals
preserves ascending chains, proving the assertion.
\end{proof}

\begin{remark}
The first denominator construction uses finite left uniform dimension
of \(K\), which comes from finite right uniform dimension of \(H\)
via \(S:H\to K\). The essentiality argument in Proposition
\ref{prop:essential-image-extension} and Proposition \ref{prop:uniform-bound} then proves
finite left uniform dimension of \(H\). Only after this step is
finite left uniform dimension of \(J\) obtained via \(S^2:H\to J\)
and used in the second denominator construction. 
\end{remark}

\noindent\textbf{Acknowledgments.} The author is a member of the “National Group for Algebraic and Geometric Structures and their Applications” (GNSAGA-INdAM) and is supported by a postdoctoral fellowship at the ULB within the framework of the PDR project “Reconstruction of modules and algebraic objects from closed and monoidal structures on their representation categories” funded by the FNRS under the grant number T.0318.25F (PI J. Vercruysse). 

\noindent\textbf{Use of AI.} The proof strategy was suggested by GPT-6 (OpenAI) through an iterative dialogue with the author; intermediate results helped identify the key difficulties and arrive at a simpler argument. All AI-assisted material used in this paper has been checked by the author, who takes full responsibility for the content of the paper.

\end{document}